\documentclass[12pt, reqno]{amsart}

\usepackage{amsfonts,amssymb}
\usepackage{graphicx}
\usepackage{epsfig}
\usepackage{latexsym}
\usepackage{amsmath,amsthm}
\usepackage{mathrsfs}
\usepackage[all,cmtip]{xy}
\usepackage{comment}
\usepackage[dvipsnames]{xcolor}

\usepackage[colorlinks]{hyperref}
\hypersetup{colorlinks,linkcolor={red},citecolor={blue},urlcolor={red}}
\theoremstyle{plain}
\newtheorem{theorem}{Theorem}[section]
\newtheorem{lemma}[theorem]{Lemma}
\newtheorem{definition}[theorem]{Definition}

\newtheorem{cor}[theorem]{Corollary}
\newtheorem{remark}[theorem]{Remark}

\numberwithin{equation}{section}

\begin{document}
\baselineskip=15.5pt

\title{Semi--finite Vector Bundles on Complex Tori}
\author[P. Adroja]{Pavan~Adroja}
\address{Department of Mathematics, IIT Gandhinagar,
 Near Village Palaj, Gandhinagar - 382055, India}
 \email{pavan.a@alumni.iitgn.ac.in, adrojapavan@gmail.com}
\author[S. Amrutiya]{Sanjay~Amrutiya}
\address{Department of Mathematics, IIT Gandhinagar,
 Near Village Palaj, Gandhinagar - 382055, India}
 \email{samrutiya@iitgn.ac.in}
\subjclass[2020]{14J60, 14L15, 18M25}
\keywords{Semi--finite bundles, complex torus, Tannakian categories, fundamental group schemes}
\date{}

\begin{abstract}
We study finite and semi--finite vector bundles on complex tori. We give an explicit decomposition of such bundles in terms of torsion and unipotent factors. As a consequence, we prove that the extended Nori fundamental group scheme of a complex torus decomposes as the product of its \'etale fundamental group scheme and its unipotent fundamental group scheme.
\end{abstract}

\maketitle
\section{Introduction}

The notion of finite vector bundles was studied by Nori in \cite{No}. When $k$ is a field of characteristic zero and $X$ is a proper integral scheme having a rational point $x\in X(k)$, the category of finite vector bundles, equipped with the fiber functor $x^*$ induced by pullback, forms a neutral Tannakian category. The \emph{Nori fundamental group scheme} $\pi^{\rm N}(X,x)$ of $(X,x)$ is a Tannakian fundamental group scheme corresponding to the category of finite bundles. This is a natural strategy, initiated by Nori, to study various classes of vector bundles that form Tannakian categories and the associated fundamental group schemes. Since $k$ has zero characteristic, it is well known that the Nori fundamental group scheme $\pi^{\rm N}(X,x)$ is isomorphic to the \'etale fundamental group scheme $\pi^{\rm \acute{e}t}(X,x)$.

Otabe \cite{Ot} examined a category that is larger than the category of finite bundles in characteristic zero. He introduced the concept of semi--finite bundles, those filtered by finite bundles, and demonstrated that the category of semi--finite bundles over $X$ forms a neutral Tannakian category. The corresponding affine group scheme is referred to as an \emph{extended Nori fundamental group scheme}, denoted by $\pi^{\mathrm{EN}}(X, x)$.

In this article, we study finite and semi--finite holomorphic vector bundles on complex tori. We explicitly describe the decomposition of finite and semi--finite vector bundles on complex tori (see Lemma \ref{finite is sum of torsion on torus} and Lemma \ref{sf=f+u}). Furthermore, employing the Deligne tensor product of Tannakian categories, we prove that the extended Nori fundamental group scheme of a complex torus decomposes as the product of the \'etale fundamental group scheme and the unipotent fundamental group scheme. An analogous result was established in \cite{AA} for abelian varieties using a different method.

\section{Preliminaries}\label{section: Preliminaries}
Let $X$ be a compact complex manifold. 
Let $f(t) = \sum_{k=0}^n a_k t^k\,\in\, \mathbb{N}[t]$ be a polynomial
whose coefficients are nonnegative integers. For any holomorphic vector bundle
$E$ over $X$, define the holomorphic vector bundle
$$
f(E)\ :=\ \bigoplus_{k=0}^n (E^{\otimes k})^{\oplus a_k},
$$
where $E^{\otimes 0}$ denotes the trivial line bundle ${\mathcal O}_X$.

\begin{definition}[{\cite{No}}]\label{def-finite}\rm{
A holomorphic vector bundle $E$ over $X$ is said to be \textit{finite} if there are two distinct polynomials $f$ and $g$,
whose coefficients are nonnegative integers, such that $f(E)$ is holomorphically isomorphic to $g(E)$.
}
\end{definition} 

The arguments of \cite[Chapter I, Lemma 3.1]{No} can be adapted to the setting of compact complex manifolds, yielding equivalent criteria for a bundle to be finite. In particular, a line bundle is finite if and only if it is torsion. Recall that a line bundle $L$ on $X$ is called \emph{torsion} if there exists a positive integer $m$ such that  $L^{\otimes m} \simeq \mathcal{O}_X$.

\begin{definition}[{\cite{No}}]\label{def-unipotent}\rm{
A holomorphic vector bundle $E$ on $X$ is called \textit{unipotent} if it has a filtration of holomorphic subbundles 
$$
E\,=\,E_0\,\supset\, E_1 \,\supset\, E_2 \,\supset\,\cdots\, \supset\, E_{n+1}\,=\,0
$$ such that $E_i/E_{i+1}$ is a trivial bundle, for all $0\, \leq\, i\, \leq\, n$.
}
\end{definition}

\begin{definition}[{\cite{Ot}}]\label{def-semifinite}\rm{
A holomorphic vector bundle $E$ on $X$ is called \textit{semi--finite} if it has a filtration of holomorphic subbundles 
$$
E\,=\,E_0\,\supset\, E_1 \,\supset\, E_2 \,\supset\,\cdots\, \supset\, E_{n+1}\,=\,0
$$ such that $E_i/E_{i+1}$ is a finite bundle, for all $0\, \leq\, i\, \leq\, n$.
}
\end{definition}

Let \(\mathcal{C}^{\mathrm{N}}(X)\), \(\mathcal{C}^{\mathrm{uni}}(X)\), and \(\mathcal{C}^{\mathrm{EN}}(X)\) denote the full subcategories of the category of holomorphic vector bundles on \(X\), whose objects are finite, unipotent, and semi--finite bundles, respectively. Let \(\star\) be a placeholder for any of the symbols \(\mathrm{N}, \mathrm{EN}, \mathrm{uni}\).  

Fix a base point \(x_0 \in X\). Then, there exists a neutral fibre functor  
\[
{x_0}^* \colon \mathcal{C}^{\star}(X) \longrightarrow \mathbf{Vect}_{\mathbb{C}}
\]
that maps each vector bundle \(E\) to its fiber \(E_{x_0}\) at \(x_0\), where $\mathbf{Vect}_{\mathbb{C}}$ denotes the category of finite-dimensional $\mathbb{C}$-vector spaces. The pair \((\mathcal{C}^{\star}(X), {x_0}^*)\) forms a neutral Tannakian category. By Tannakian duality \cite[Theorem 1.12]{PD}, there exists an affine group scheme \(\pi^{\star}(X, x_0)\) over \(\mathbb{C}\) such that the category of finite-dimensional representations \(\mathrm{Rep}(\pi^{\star}(X, x_0))\) is equivalent to the category \(\mathcal{C}^{\star}(X)\).  

The affine group schemes \(\pi^{\mathrm{N}}(X, x_0)\), \(\pi^{\mathrm{uni}}(X, x_0)\), and \(\pi^{\mathrm{EN}}(X, x_0)\) are known as the \emph{Nori fundamental group scheme}, the \emph{unipotent fundamental group scheme}, and an \emph{extended Nori fundamental group scheme}, respectively. For further details regarding these affine group schemes, see \cite{No, Ot, AA}. Since the field of complex numbers has characteristic zero, and every finite group scheme over a field of characteristic zero is \'etale, the Nori fundamental group scheme \(\pi^{\mathrm{N}}(X, x_0)\) is isomorphic to the \'etale fundamental group scheme \(\pi^{\mathrm{\acute{e}t}}(X, x_0)\).

\begin{remark}
\rm	In \cite{BV}, Borne and Vistoli introduced the virtually unipotent fundamental group by adapting Nori’s construction in \cite[Chapter-II]{No}. They replace finite group scheme with virtually unipotent, i.e., an extension of finite group scheme by a unipotent. The Tannakian category associated to the virtually unipotent fundamental group is precisely the category of semi--finite vector bundles (see \cite[Theorem 10.5]{BV}).
\end{remark}

\subsection*{Tensor product of categories}
In \cite[\S 5, p. 142]{PD}, Deligne defined a tensor product of abelian categories. Let $\mathcal{T}_1$ and $\mathcal{T}_2$ be two $k$-linear abelian categories. The Deligne tensor product of $\mathcal{T}_1$ and $\mathcal{T}_2$ is a $k$-linear abelian category $\mathcal{T}$ with $k$-bilinear and right-exact-in-each-variable functor $\otimes_D:\mathcal{T}_1\times \mathcal{T}_2 \rightarrow \mathcal{T}$ which satisfies the universal property: for any $k$-linear abelian category $\mathcal{C}$ with $k$-bilinear and right-exact-in-each-variable functor $F:\mathcal{T}_1\times \mathcal{T}_2 \rightarrow \mathcal{C}$, there exists a $k$-linear right exact functor $F':\mathcal{T} \rightarrow \mathcal{C}$ such that the diagram 
\[
\xymatrix{
	\mathcal{T}_1\times \mathcal{T}_2 \ar[d]_{F}  \ar[rr]^{\otimes_D}  & &  \mathcal{T} \ar[dll]^{F'} \\
	\mathcal{C}  & &  \\
}
\]
commutes. 
We will denote $\mathcal{T}$ by $\mathcal{T}_1\otimes_D \mathcal{T}_2$. 

\begin{remark}\label{Rep(G_1*G_2)= Rep(G_1)*Rep(G_2)}
	\rm By \cite[5.18, 6.21]{PD}, one can conclude that, for affine group schemes $G_1$ and $G_2$ over a field $k$, the category $\mathrm{Rep}(G_1\times_k G_2)$ is equivalent to the Deligne tensor product $\mathrm{Rep}(G_1)\otimes_D \mathrm{Rep}(G_2)$ of categories $\mathrm{Rep}(G_1)$ and $\mathrm{Rep}(G_2)$.
\end{remark}

\section{Semi--finite bundles on complex torus}
In this section, we will see that an extended Nori fundamental group scheme of a complex torus is isomorphic to the product of the Nori fundamental group scheme and the unipotent fundamental group scheme. Throughout this section, let $X=V/\Lambda$ be a complex torus of complex dimension $g$. Then, we have $\Lambda \simeq \mathbb{Z}^{2g}$.

\begin{lemma}\label{finite is sum of torsion on torus}
	Let $X=V/\Lambda$ be a complex torus of complex dimension $g$, and let $E$ be a finite holomorphic vector bundle on $X$. Then, 
	$$E\simeq \bigoplus_{i=1}^n L_i\,,$$
	where $L_i$'s are torsion line bundles on $X$.
\end{lemma}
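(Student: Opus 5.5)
The strategy is to show that a finite bundle on $X$ becomes trivial on a finite étale cover, and then use the fact that every finite étale cover of a torus is Galois with abelian group.

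\textbf{Step 1: trivialization on a finite cover.} Adapting Nori's argument \cite[Chapter I]{No} to the analytic setting, a finite bundle $E$ of rank $r$ corresponds to a representation $\rho\colon \pi_1(X,x_0)\to \mathrm{GL}_r(\mathbb{C})$ with finite image. Concretely, the indecomposable summands of all $E^{\otimes k}$ form a finite set. Hence the holomorphic frame bundle reduces to a finite group $G$, giving a finite étale Galois cover $p\colon Y\to X$ with group $G$ such that $p^*E$ is trivial and $E\simeq Y\times^G \mathbb{C}^r$.

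An alternative, more hands-on route is also available. By the equivalent criteria of \cite[Ch.~I, Lemma 3.1]{No}, adapted to compact complex manifolds, a finite bundle is a direct summand of a bundle of the form $p_*\mathcal{O}_Y$ for a finite étale cover $p$. So it suffices to handle such pushforwards together with their direct summands.

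\textbf{Step 2: structure of the cover.} Any connected finite étale cover $Y\to X$ corresponds to a finite-index subgroup $\Lambda'\subset\Lambda=\pi_1(X)$. Thus $Y=V/\Lambda'$ is again a complex torus, and the cover is Galois with abelian group $G=\Lambda/\Lambda'$ acting by translations. The representation $\rho$ of $\pi_1(X)=\Lambda\simeq\mathbb{Z}^{2g}$ factors through the finite abelian group $G$. Over $\mathbb{C}$, every representation of a finite abelian group is diagonalizable into characters $\chi_1,\dots,\chi_r\colon G\to\mathbb{C}^*$, so $\rho\simeq\bigoplus_i\chi_i$. Each $\chi_i$ has finite order $m_i$, and it defines a flat line bundle $L_i=Y\times^G\mathbb{C}_{\chi_i}$ with $L_i^{\otimes m_i}\simeq\mathcal{O}_X$. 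This gives $E\simeq\bigoplus L_i$ with each $L_i$ torsion. In the pushforward formulation of Step 1 the same conclusion appears as $p_*\mathcal{O}_Y\simeq\bigoplus_{\chi\in\widehat G}L_\chi$.

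\textbf{Step 3: passing to direct summands.} In the pushforward formulation one still has to pass from $p_*\mathcal{O}_Y$ to an arbitrary direct summand $E$. This is handled by the Krull–Schmidt theorem for holomorphic bundles on compact complex manifolds (Atiyah's argument works verbatim). Since $p_*\mathcal{O}_Y$ is a sum of line bundles, any summand of it is isomorphic to a sub-sum of those line bundles.

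\textbf{Main obstacle.} The main obstacle is Step 1: establishing in the analytic category that finiteness forces $E$ to arise from a finite-monodromy representation, or equivalently to trivialize on a finite étale cover. Nori's proof uses algebraic tools, namely Tannakian reconstruction and the torsor of frames. I would first try to replicate it via the finite set $S$ of indecomposable summands of the $E^{\otimes k}$.
\begin{itemize}
\item[(a)] Show that every element of $S$ is semistable of degree zero with respect to a fixed Kähler class, hence flat. This would use the Narasimhan–Seshadri/Uhlenbeck–Yau correspondence, or argue via the Chern classes of $E$, which vanish rationally since $c(E)^k$ stays bounded.
\item[(b)] Show that $E$ is then given by a unitary representation whose image generates a group with only finitely many irreducible representations appearing in its tensor powers, and conclude that this image is finite by Nori's argument.
\end{itemize}
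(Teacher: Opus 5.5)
Your argument is essentially the paper's: the paper takes as known that $\pi^{\rm N}(X,0)=\pi^{\rm \acute{e}t}(X,0)=\widehat{\Lambda}\simeq\widehat{\mathbb{Z}}^{2g}$, which is your Step~1 (finite bundles are exactly those arising from representations of finite quotients of $\Lambda=\pi_1(X)$), and then, as in your Step~2, uses that every indecomposable representation of this abelian group is a character, so every indecomposable finite bundle is a torsion line bundle. Your last paragraph, which sketches an analytic justification of Step~1, goes beyond the paper; if you pursue it, note that degree-zero semistability alone does not give flatness, and even with vanishing Chern classes it gives only a flat structure, not a unitary one, so for (b) you would need polystability or else apply Nori's finiteness criterion to the Zariski closure of the monodromy.
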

\begin{proof}
Since $\pi^{\rm N}(X, 0)=\pi^{\rm \acute{e}t}(X, 0) =\widehat{\Lambda}=\widehat{\mathbb{Z}}^{2g}$, every indecomposable representation of $\pi^{\rm N}(X, 0)$ has rank $1$. Thus, every indecomposable finite holomorphic vector bundle has rank $1$.
\end{proof}

\begin{lemma}\label{n_*O_X}
Let $X$ be a complex torus of complex dimension $g$, and let $[n]:X\rightarrow X$ denote the multiplication-by-$n$ morphism. Then, the pushforward $[n]_*\mathcal{O}_X$ decomposes as 
$$
	[n]_{*}\mathcal{O}_X \simeq \bigoplus_{i} L_i ,
$$
where the $L_i$ run over all $n$-torsion line bundles on $X$.
\end{lemma}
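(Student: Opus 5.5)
The plan is to realize $[n]:X\to X$ as a finite étale Galois cover with group $X[n]=\frac{1}{n}\Lambda/\Lambda\simeq(\mathbb{Z}/n)^{2g}$ acting by translations, and then to decompose $[n]_*\mathcal{O}_X$ into isotypic pieces under this action. Concretely, $[n]$ is induced by $v\mapsto nv$ on $V$, so $X=V/\Lambda\to V/\Lambda$ is the quotient of $X$ by the free action of the finite group $K:=\ker[n]=X[n]$ of order $n^{2g}$. Hence $[n]$ is a finite unramified covering of degree $n^{2g}$, and $[n]_*\mathcal{O}_X$ is a holomorphic vector bundle of rank $n^{2g}$.

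The group $K$ acts on $[n]_*\mathcal{O}_X$ through translations $t_a^*$, $a\in K$, and this action is $\mathcal{O}_X$-linear. Since $K$ is finite abelian and we work over $\mathbb{C}$, the averaging projectors $p_\chi=\frac{1}{|K|}\sum_{a\in K}\chi(a)^{-1}t_a^*$ split the bundle as $[n]_*\mathcal{O}_X=\bigoplus_{\chi\in\widehat{K}}\mathcal{F}_\chi$, where $\widehat{K}=\mathrm{Hom}(K,\mathbb{C}^*)$. Locally over an evenly covered open set $U$, $[n]^{-1}(U)\simeq U\times K$, and $\mathcal{F}_\chi|_U$ is spanned by the single function $(u,a)\mapsto\chi(a)$. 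Thus each $\mathcal{F}_\chi$ is a line bundle $L_\chi$, with $L_{\mathbf 1}=\mathcal{O}_X$. Multiplication of functions gives maps $L_\chi\otimes L_\psi\to L_{\chi\psi}$, which are isomorphisms by the local description. Therefore $\chi\mapsto L_\chi$ is a homomorphism $\widehat{K}\to\mathrm{Pic}(X)$, and each $L_\chi$ is $n$-torsion because $\chi^n=1$.

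It remains to show that $\chi\mapsto L_\chi$ is injective and that its image is all of $\mathrm{Pic}(X)[n]$. For injectivity, suppose $L_\chi\simeq\mathcal{O}_X$. Then $H^0(X,L_\chi)\neq0$, and a nonzero section is a holomorphic function $f$ on the compact connected $X$ with $t_a^*f=\chi(a)f$. Such an $f$ is constant, which forces $\chi=\mathbf 1$. Hence the image consists of $n^{2g}$ distinct $n$-torsion line bundles.

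The step I expect to be the main obstacle is the count $|\mathrm{Pic}(X)[n]|=n^{2g}$ for a possibly non-algebraic torus. Torsion line bundles have vanishing Chern class, so they lie in $\mathrm{Pic}^0(X)$. By Appell–Humbert, $\mathrm{Pic}^0(X)\simeq\mathrm{Hom}(\Lambda,U(1))$, and equivalently $\mathrm{Pic}^0(X)=H^1(X,\mathcal{O}_X)/H^1(X,\mathbb{Z})\simeq\overline{V}^*/\Lambda^*$, a real torus of dimension $2g$. Its $n$-torsion is therefore $\mathrm{Hom}(\Lambda,\mu_n)\simeq(\mathbb{Z}/n)^{2g}$. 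Alternatively, one can avoid the count: every $n$-torsion $L$ is flat, hence given by a character $\rho:\Lambda\to\mathbb{C}^*$ with $\rho^n$ trivial up to coboundary. A flat bundle is trivial exactly when its unitary character is trivial, so $\rho$ is a $\mu_n$-valued character of $\Lambda$. Such a $\rho$ factors through $\Lambda/n\Lambda\simeq K$ via the identification $\frac1n\Lambda/\Lambda\cong\Lambda/n\Lambda$, and it matches $L_\chi$ for the corresponding $\chi$. Either route shows the injective map above is a bijection, giving $[n]_*\mathcal{O}_X\simeq\bigoplus_{L\in\mathrm{Pic}(X)[n]}L$.
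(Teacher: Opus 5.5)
Your proof is correct, and it uses the same framework as the paper: the character decomposition of $[n]_*\mathcal{O}_X$ under $X[n]$, closed off by the count $n^{2g}$. The key step, however, runs in the opposite direction.

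The paper proves that every $n$-torsion line bundle $L$ \emph{occurs} in $[n]_*\mathcal{O}_X$. It uses $[n]^*L\simeq L^{\otimes n}\simeq\mathcal{O}_X$ and adjunction to get a nonzero map $L\to[n]_*\mathcal{O}_X$. It then splits off $L$ by semisimplicity and lets the rank count force each torsion bundle to appear exactly once.

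You prove instead that the isotypic summands $L_\chi$ are pairwise \emph{distinct} $n$-torsion line bundles. Multiplication $L_\chi\otimes L_\psi\to L_{\chi\psi}$ makes $\chi\mapsto L_\chi$ a group homomorphism. Its kernel is trivial because $H^0(X,[n]_*\mathcal{O}_X)=H^0(X,\mathcal{O}_X)=\mathbb{C}$. The count then gives surjectivity onto $\mathrm{Pic}(X)[n]$.

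What your route buys:
\begin{itemize}
\item It checks things the paper only asserts: that each isotypic piece has rank one, that it is $n$-torsion, and that $|\mathrm{Pic}(X)[n]|=n^{2g}$, which you justify via Appell--Humbert.
\item It needs neither the identity $[n]^*L\simeq L^{\otimes n}$ (which itself relies on torsion bundles lying in $\mathrm{Pic}^0(X)$) nor the unstated fact behind the paper's ``splits by semisimplicity''. That fact is that a nonzero map from $L$ into a sum of flat unitary line bundles forces $L$ to be isomorphic to one of the summands.
\item It produces the group isomorphism $\widehat{X[n]}\simeq\mathrm{Pic}(X)[n]$ as a by-product.
\end{itemize}
The paper's adjunction argument is shorter and more formal.

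Your alternative monodromy route is essentially a concrete form of the paper's surjectivity argument. As written it asserts that $\rho$ ``matches $L_\chi$'' without computing the monodromy of $L_\chi$; it equals $\chi^{\pm1}$ composed with $\Lambda\to\Lambda/n\Lambda\simeq X[n]$. Your first route is complete without this, so it is not a gap in the proof.
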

\begin{proof}
Since $[n]$ is a finite \'etale morphism of degree $n^{2g}$, the pushforward $[n]_{*}\mathcal{O}_X$ is a vector bundle of rank $n^{2g}$ and it has the natural action of the finite abelian group $G = X[n] \simeq (\mathbb{Z}/n\mathbb{Z})^{2g}$ of $n$-torsion points; where $X[n]:= \mathrm{Ker}([n])$. In other words, $[n]_{*}\mathcal{O}_X$ is a $G$-equivariant vector bundle. Since $G$ is finite abelian, the representation of $G$ splits as a direct sum of one-dimensional characters and hence $[n]_{*}\mathcal{O}_X$ decomposes as a direct sum of line bundles equipped with $G$-action. These line bundles are precisely the $n$-torsion line bundles on $X$. Since $[n]^{*}L_i \simeq L_i^{\otimes n} \simeq \mathcal{O}_X$, by the adjunction property, we have a non-trivial morphism $L_i \rightarrow [n]_{*}\mathcal{O}_X$. This morphism splits because the bundle $[n]_*\mathcal{O}_X$ is semisimple. As there are exactly $n^{2g}$ distinct $n$-torsion line bundles and $[n]_{*}\mathcal{O}_X$ has rank $n^{2g}$, each torsion line bundle appears exactly once in the decomposition.
\end{proof}

\begin{lemma}\label{H^0=H^1=0}
	Let $X$ be a complex torus of complex dimension $g$, and let $L$ be a non-trivial torsion line bundle on $X$. Then, 
	$$\mathrm{H}^0(X,L)=\mathrm{H}^1(X,L)=0.$$
\end{lemma}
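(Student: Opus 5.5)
The plan is to handle $\mathrm{H}^0$ directly and then obtain $\mathrm{H}^1$ from Lemma \ref{n_*O_X}, which lets me compare with the known cohomology of $\mathcal{O}_X$.

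\emph{Vanishing of $\mathrm{H}^0$.} Suppose $L^{\otimes m}\simeq\mathcal{O}_X$ and $0\neq s\in \mathrm{H}^0(X,L)$. Then $s^{\otimes m}$ is a nonzero section of $\mathcal{O}_X$. Since $X$ is compact, this section is a nonzero constant, so it vanishes nowhere. Hence $s$ vanishes nowhere, so $s$ trivializes $L$. This contradicts the non-triviality of $L$.

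\emph{Vanishing of $\mathrm{H}^1$.} Choose $n$ with $L^{\otimes n}\simeq \mathcal{O}_X$. Since $[n]$ is finite, it has no higher direct images, so
$$\mathrm{H}^q(X,[n]_*\mathcal{O}_X)\simeq \mathrm{H}^q(X,\mathcal{O}_X)\quad\text{for all } q.$$
By Lemma \ref{n_*O_X}, the left side equals $\bigoplus_i \mathrm{H}^q(X,L_i)$, where $L_i$ runs over all $n$-torsion line bundles. Exactly one of these is $\mathcal{O}_X$, and its contribution is the whole right side. The step I regard as the main obstacle is justifying this matching canonically, not just by comparing dimensions.

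Both sides are finite dimensional. For $q=1$ the right side is $\mathrm{H}^1(X,\mathcal{O}_X)\cong\mathbb{C}^g$. On the left, the summand indexed by $\mathcal{O}_X$ already has dimension $g$. Dimension counting then forces $\mathrm{H}^1(X,L_i)=0$ for every nontrivial $L_i$, and in particular for $L$.

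To make the comparison honest, I would note that the isomorphism comes from the composition
$$\mathrm{H}^1(X,\mathcal{O}_X)\to \mathrm{H}^1(X,[n]_*\mathcal{O}_X)\xrightarrow{\ \sim\ } \mathrm{H}^1(X,\mathcal{O}_X),$$
where the first map is induced by the inclusion of the trivial summand $\mathcal{O}_X\hookrightarrow[n]_*\mathcal{O}_X$. This composition is pullback $[n]^*$. On $\mathrm{H}^1(X,\mathcal{O}_X)\cong \overline{V}^*$, the pullback $[n]^*$ is multiplication by $n$, so it is injective. Hence the trivial summand maps isomorphically onto the whole space, and the dimension count is valid.

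As a backup, the vanishing can be seen via flat unitary bundles. A torsion $L$ corresponds to a nontrivial unitary character $\chi$ of $\Lambda$. Harmonic theory gives $\mathrm{H}^{0,q}(X,L)\cong \wedge^q\overline{V}^*\otimes \mathrm{H}^0(X,L)$, since the harmonic forms are translation-invariant forms times flat sections. By the first step $\mathrm{H}^0(X,L)=0$, so this space vanishes for all $q$. I would cite this only if the dimension argument above is judged insufficient.
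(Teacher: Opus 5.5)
Your proposal is correct and follows the paper's route. $\mathrm{H}^0$ vanishes because a nonzero section of a torsion line bundle is nowhere vanishing; your $s^{\otimes m}$ argument is a cleaner version of the paper's ``degree zero'' remark. $\mathrm{H}^1$ vanishes by combining the Leray isomorphism for the finite map $[n]$ with the decomposition of $[n]_*\mathcal{O}_X$ and a dimension count. Your extra check that $[n]^*$ acts by $n$ on $\mathrm{H}^1(X,\mathcal{O}_X)$ is harmless but unnecessary: an abstract isomorphism $A\oplus B\simeq C$ of finite-dimensional spaces with $\dim A=\dim C$ already forces $B=0$.
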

\begin{proof}
	Since $L$ is a torsion line bundle, the degree of $L$ is zero. This implies that a non-zero global section of $L$ cannot vanish anywhere. So, if such a section exists, then it gives an isomorphism $L \simeq \mathcal{O}_X$. Hence, $\mathrm{H}^0(X,L)=0$.
	
	Let $n > 1$ be such that $L^{\otimes n} \simeq \mathcal{O}_X$. Consider the multiplication-by-$n$ morphism $ [n] : X \rightarrow X $. By Lemma \ref{n_*O_X}, the pushforward $[n]_{*}\mathcal{O}_X$ decomposes as
	$$
	[n]_{*}\mathcal{O}_X \simeq \bigoplus_{i} L_i ,
	$$
	where the $L_i$ run over all $n$-torsion line bundles on $X$. In particular, $\mathrm{H}^{1}(X,L_i)$ is a direct summand of $\mathrm{H}^{1}(X, [n]_{*}\mathcal{O}_X)$. By the Leray spectral sequence, we have an inclusion 
	$$0 \longrightarrow \mathrm{H}^{1}(X, [n]_{*}\mathcal{O}_X) \longrightarrow \mathrm{H}^{1}(X, \mathcal{O}_X),$$
	which is isomorphism as $\mathrm{H}^{1}(X,\mathcal{O}_X)$ is a direct summand of $\mathrm{H}^{1}(X, [n]_{*}\mathcal{O}_X)$. Therefore, $\mathrm{H}^{1}(X, L_i) = 0$ for every $L_i \neq \mathcal{O}_X$.
\end{proof}

\begin{lemma}\label{sf=f+u}
	Let $X$ be a complex torus of complex dimension $g$, and let $E$ be a semi--finite holomorphic vector bundle on $X$. Then, 
	$$E\simeq \bigoplus_{i=1}^n U_i \otimes L_i\,, $$
	where $L_i$'s are torsion line bundles on $X$ and $U_i$'s are unipotent bundles on $X$.
\end{lemma}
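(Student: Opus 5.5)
We argue by induction on the length $n$ of a filtration of $E$ by subbundles whose successive quotients are finite. By Lemma \ref{finite is sum of torsion on torus}, each quotient $E_i/E_{i+1}$ is a direct sum of torsion line bundles. Refining the filtration, we may therefore assume every graded piece is a torsion line bundle. The base case is immediate, since a torsion line bundle $L$ equals $\mathcal{O}_X\otimes L$ and $\mathcal{O}_X$ is unipotent.

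For the inductive step, write the filtration as an extension
$$0\longrightarrow E_1\longrightarrow E\longrightarrow L\longrightarrow 0,$$
with $L$ a torsion line bundle and $E_1$ semi--finite with a shorter filtration. By induction, $E_1\simeq\bigoplus_j U_j\otimes L_j$. Grouping together the summands with isomorphic $L_j$, we may assume the $L_j$ are pairwise non-isomorphic. The extension class lies in
$$\mathrm{Ext}^1(L,E_1)=\mathrm{H}^1(X,E_1\otimes L^{-1})=\bigoplus_j \mathrm{H}^1(X,U_j\otimes L_j\otimes L^{-1}).$$

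The key claim is that $\mathrm{H}^1(X,U\otimes M)=0$ for every unipotent $U$ and every nontrivial torsion line bundle $M$. We prove this by induction on the length of the unipotent filtration. The successive quotients of $U\otimes M$ are direct sums of copies of $M$, and Lemma \ref{H^0=H^1=0} gives $\mathrm{H}^0(X,M)=\mathrm{H}^1(X,M)=0$. The long exact cohomology sequences then give $\mathrm{H}^1(X,U\otimes M)=0$.

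Consequently, only the summand with $L_j\simeq L$ can carry a nonzero extension class; if no such summand exists, set $U_{j_0}=0$. The extension is the pushout of its restriction to that summand, so
$$E\simeq \Big(\bigoplus_{j\ne j_0}U_j\otimes L_j\Big)\oplus E',\qquad 0\to U_{j_0}\otimes L\to E'\to L\to 0.$$
Twisting by $L^{-1}$, the bundle $E'\otimes L^{-1}$ is an extension of $\mathcal{O}_X$ by the unipotent bundle $U_{j_0}$, hence is unipotent. Thus $E'\simeq (E'\otimes L^{-1})\otimes L$ has the required form, which completes the induction.

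The main obstacle is the splitting step: we must check that an extension class supported in a single direct summand really splits off the other summands. This holds because $\mathrm{Ext}^1$ is additive in the first argument, so the extension is the direct sum of the trivial extensions by the other summands and the extension $E'$. The vanishing of $\mathrm{H}^1$ is the cohomological input from Lemma \ref{H^0=H^1=0}; it uses only that the cohomology of a twist of a unipotent bundle is controlled by the cohomology of its graded pieces.
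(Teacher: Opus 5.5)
Your proof is correct and follows essentially the same route as the paper. Both arguments refine to a filtration with torsion line bundle quotients, induct on its length (i.e.\ on the rank), split the class in $\mathrm{Ext}^1(L,E_1)$ according to whether $L_j\simeq L$, kill the components with $L_j\not\simeq L$ via $\mathrm{H}^1(X,U\otimes M)=0$ (obtained from Lemma \ref{H^0=H^1=0} by induction along the unipotent filtration), and twist the surviving extension by $L^{-1}$ to see it is unipotent. Your pushout formula $E\simeq\bigl(\bigoplus_{j\ne j_0}U_j\otimes L_j\bigr)\oplus E'$ is stated more accurately than the paper's $E\simeq W'\oplus E'$. The only slip is that this uses additivity of $\mathrm{Ext}^1(L,-)$ in the second argument, not the first.
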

\begin{proof}
	We proceed by induction on the rank of \(E\). If \(\mathrm{rank}(E)=1\), then by the definition of semi--finite bundles and Lemma~\ref{finite is sum of torsion on torus}, \(E\) must be a torsion line bundle. In this case, the statement holds with \(U=\mathcal{O}_X\) and \(L=E\). Assume the statement holds for all semi--finite bundles of rank \(< r\), and let \(\mathrm{rank}(E)=r\). By Lemma~\ref{finite is sum of torsion on torus} and the definition of semi--finite bundles, there exists a filtration
	\[
	0 = E_0 \subset E_1 \subset \cdots \subset E_{r-1} \subset E_r = E
	\]
	by subbundles such that each successive quotient \(E_i/E_{i-1}\) is a torsion line bundle. In particular, we have a short exact sequence
	\begin{equation}\label{eq:main-sequence}
		0 \longrightarrow E_{r-1} \longrightarrow E \longrightarrow L \longrightarrow 0 \,,
	\end{equation}
	where \(L = E_r/E_{r-1}\) is a torsion line bundle. Since \(\mathrm{rank}(E_{r-1})=r-1\), by the induction hypothesis, we have
	\[
	E_{r-1} \simeq \bigoplus_i U_i \otimes L_i \,,
	\]
	where each \(U_i\) is unipotent and each \(L_i\) is a torsion line bundle on \(X\). Split the direct sum for $E_{r-1}$ according to whether $L_j\simeq L$ or $L_j \not \simeq L$. Write
	$$E_{r-1} \simeq W \oplus (U_0 \otimes L) \,,$$
	where $W:= \bigoplus_{j:L_j \not \simeq L} U_j \otimes L_j$ and $U_0:= \bigoplus_{j:L_j \simeq L} U_j$. Note that $U_0$ is unipotent as it is direct sum of unipotent bundles.

	Since 
	$$\mathrm{Ext}^1 \bigl( L, E_{r-1} \bigr) \simeq \mathrm{Ext}^1 \bigl( L, W \bigr) \oplus \mathrm{Ext}^1 \bigl( L, U_0 \otimes L \bigr)\,,$$
	we have $E \simeq W' \oplus E'$ such that $W'$ and $E'$ fit into the following exact sequences: 
	\begin{equation}\label{E:W'}
		0\longrightarrow W \longrightarrow W' \longrightarrow L \longrightarrow 0 \,.
	\end{equation}
	\begin{equation}\label{E:E'}
		0\longrightarrow U_0 \otimes L \longrightarrow E' \longrightarrow L \longrightarrow 0 \,.
	\end{equation}
	Tensoring the exact sequence \eqref{E:E'} with \(L^{-1}\), and using that tensoring by a line bundle is an exact auto-equivalence of the category of vector bundles on \(X\), we obtain an equivalent extension
	\begin{equation}\label{eq:quence}
		0 \longrightarrow U_0 \longrightarrow E'\otimes L^{-1} \longrightarrow \mathcal{O}_X \longrightarrow 0 \,.
	\end{equation}
	From the above sequence, $U':=E'\otimes L^{-1}$ is a unipotent bundle. Hence, $E'=U' \otimes L$. Therefore, it suffices to show that the exact sequence \eqref{E:W'} splits. The sequence \eqref{E:W'} splits if the sequence 
	$$0 \longrightarrow U_j \otimes L_j \longrightarrow F_j \longrightarrow L \longrightarrow 0 $$
	splits for every $j$, where $L_j \not \simeq L$. Thus it is enough to prove that any extension
	\begin{equation}\label{eq:Ui-extension}
		0 \longrightarrow U \otimes L' \longrightarrow F \longrightarrow \mathcal{O}_X \longrightarrow 0
	\end{equation}
	splits, where \(U\) is a unipotent bundle and \(L'\) is a non-trivial torsion line bundle on \(X\).
	
	Since \(U\) is unipotent, it admits a filtration
	\[
	0 = U_0 \subset U_1 \subset \cdots \subset U_{m-1} \subset U_m = U
	\]
	with successive quotients \(U_j/U_{j-1} \simeq \mathcal{O}_X\). Tensoring by \(L'\) gives
	\[
	0 \longrightarrow U_{m-1}\otimes L' \longrightarrow U \otimes L' \longrightarrow L' \longrightarrow 0 \,.
	\]
	By induction on \(\mathrm{rank}(U)\), it suffices to show that any extension
	\begin{equation}\label{eq:L-extension}
		0 \longrightarrow L' \longrightarrow G \longrightarrow \mathcal{O}_X \longrightarrow 0
	\end{equation}
	splits. Since $\mathrm{Ext}^1(\mathcal{O}_X, L') \simeq \mathrm{H}^1(X, L')$ and $\mathrm{H}^1(X, L')=0$ by Lemma \ref{H^0=H^1=0}, the sequence (\ref{eq:L-extension}) splits.
\end{proof}

\begin{remark}
\rm	Lemma \ref{sf=f+u} is closely related to classical results on homogeneous vector bundles on complex tori. A holomorphic vector bundle $E$ on a complex torus $X$ is called \emph{homogenenous} if it is translation invariant. These bundles are iterated extensions of algebraically trivial line bundles (see \cite{Mat59, Mor59}). Hence, by Lemma \ref{finite is sum of torsion on torus}, semi--finite vector bundles are homogeneous.
\end{remark}

\begin{theorem}\label{Prod EN=N*uni on torus}
Let $X$ be a complex torus of complex dimension $g$. Then, we have 
\begin{align*}
	\pi^{\rm EN}(X,0) & \simeq \pi^{\mathrm{\acute{e}t}}(X,0) \times \pi^{\rm uni}(X,0).
\end{align*}
\end{theorem}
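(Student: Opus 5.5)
The plan is to show that $\mathcal{C}^{\rm EN}(X)$ is equivalent, as a neutral Tannakian category, to the Deligne tensor product $\mathcal{C}^{\rm N}(X)\otimes_D \mathcal{C}^{\rm uni}(X)$. Remark~\ref{Rep(G_1*G_2)= Rep(G_1)*Rep(G_2)} then identifies the latter with $\mathrm{Rep}(\pi^{\rm N}(X,0)\times \pi^{\rm uni}(X,0))$, and Tannakian duality together with $\pi^{\rm N}=\pi^{\rm \acute{e}t}$ gives the claim.

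\textbf{Step 1: the comparison functor.} Consider the functor
\[
F\colon \mathcal{C}^{\rm N}(X)\times\mathcal{C}^{\rm uni}(X)\to \mathcal{C}^{\rm EN}(X),\qquad (E,U)\mapsto E\otimes U .
\]
It lands in semi--finite bundles: tensoring the unipotent filtration of $U$ with $E$ gives a filtration with finite quotients $E$. It is bilinear and exact in each variable, since tensor products of vector bundles are exact. The universal property yields a right exact $\mathbb{C}$-linear functor $F'\colon \mathcal{C}^{\rm N}\otimes_D\mathcal{C}^{\rm uni}\to\mathcal{C}^{\rm EN}$. It is cleaner to phrase this on the group side. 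The functors $E\mapsto E\otimes\mathcal{O}$ and $U\mapsto \mathcal{O}\otimes U$ are the tensor inclusions $\mathcal{C}^{\rm N}\subset\mathcal{C}^{\rm EN}$ and $\mathcal{C}^{\rm uni}\subset\mathcal{C}^{\rm EN}$. They give homomorphisms $p_1\colon\pi^{\rm EN}\to\pi^{\rm N}$ and $p_2\colon\pi^{\rm EN}\to\pi^{\rm uni}$, hence $p=(p_1,p_2)\colon \pi^{\rm EN}\to \pi^{\rm N}\times\pi^{\rm uni}$. Pullback along $p$ is exactly $F'$ composed with the equivalence of Remark~\ref{Rep(G_1*G_2)= Rep(G_1)*Rep(G_2)}. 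So it suffices to show that $p$ is an isomorphism. By \cite[Prop.~2.21]{PD}, this amounts to showing that $p$ is faithfully flat and a closed immersion.

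\textbf{Step 2: closed immersion.} Every object of $\mathcal{C}^{\rm EN}(X)$ must be a subquotient of some $F(E,U)$. Lemma~\ref{sf=f+u} gives more: each semi--finite $E$ is isomorphic to $\bigoplus U_i\otimes L_i$, with $L_i$ finite and $U_i$ unipotent. Such a sum is a direct summand of $\big(\bigoplus L_i\big)\otimes\big(\bigoplus U_i\big)$, which is in the essential image of $F$. This is where the lemmas of the section do all the work.

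\textbf{Step 3: faithful flatness.} I would first try the criterion that the essential image of $\mathrm{Rep}(\pi^{\rm N}\times\pi^{\rm uni})$ is closed under subobjects in $\mathcal{C}^{\rm EN}$ and that the functor is fully faithful. Full faithfulness reduces to computing $\mathrm{Hom}(L\otimes U,\,L'\otimes U')$ for torsion line bundles $L,L'$ and unipotent bundles $U,U'$. If $L\not\simeq L'$, filter $U^\vee\otimes U'$ by trivial quotients. Then Lemma~\ref{H^0=H^1=0}, applied to $L^{-1}\otimes L'$ and used with induction on the filtration length, gives $\mathrm{H}^0\big(X,(L^{-1}\otimes L')\otimes U^\vee\otimes U'\big)=0$. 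This matches the representation side, where $\mathrm{Hom}$ between different characters of $\pi^{\rm N}$ vanishes. If $L\simeq L'$, the Hom space is $\mathrm{Hom}(U,U')$, which again matches. For subobjects: a semi--finite subbundle of $L\otimes U$ is again of the form $\bigoplus U_i\otimes L_i$ by Lemma~\ref{sf=f+u}. Its summands with $L_i\not\simeq L$ map to zero by the same vanishing, so it equals $L\otimes U''$ with $U''=(\text{subbundle})\otimes L^{-1}$ unipotent. Unipotence of $U''$ uses that a subbundle of a unipotent bundle lying in $\mathcal{C}^{\rm EN}$ is unipotent, because $\mathcal{C}^{\rm uni}$ is a Tannakian subcategory closed under subobjects.

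The main obstacle is this last step: checking that the comparison functor is fully faithful and stable under subobjects, which is what gives faithful flatness. It relies on the $\mathrm{H}^0$ and $\mathrm{H}^1$ vanishing for nontrivial torsion twists, propagated through unipotent filtrations.
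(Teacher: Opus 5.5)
Your proposal is correct and is the paper's argument in substance: Lemma~\ref{sf=f+u} gives essential surjectivity (your closed-immersion step), and the identification $\mathrm{Hom}(L\otimes U, L'\otimes U')\simeq \mathrm{Hom}(L,L')\otimes_{\mathbb{C}}\mathrm{Hom}(U,U')$ gives full faithfulness, just as the paper does for the functor $T'$ out of $\mathcal{C}^{\rm N}(X)\otimes_D\mathcal{C}^{\rm uni}(X)$. Working with the homomorphism $p=(p_1,p_2)$ induced by the two Tannakian inclusions is a slightly cleaner packaging: it makes the comparison automatically a tensor functor compatible with the fibre functors, and you justify the vanishing for $L\not\simeq L'$ via Lemma~\ref{H^0=H^1=0}, which the paper leaves implicit; moreover, once every $E\simeq p^*\bigl(\bigoplus_i L_i\boxtimes U_i\bigr)$ and $p^*$ is fully faithful, $p^*$ is already an equivalence, so your subobject check is redundant.
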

\begin{proof}
Consider the Deligne tensor product of the categories $\mathcal{C}^{\rm N}(X)$ and $\mathcal{C}^{\rm uni}(X)$. That means, there is a $k$-bilinear and right-exact-in-each-variable functor 
\begin{align}\label{D-tensor of N and uni}
\otimes_D: \mathcal{C}^{\rm N}(X) \times \mathcal{C}^{\rm uni}(X) \longrightarrow \mathcal{C}^{\rm N}(X) \otimes_{D} \mathcal{C}^{\rm uni}(X)
\end{align}
satisfying the universal property. Let $T: \mathcal{C}^{\rm N}(X) \times \mathcal{C}^{\rm uni}(X) \rightarrow \mathcal{C}^{\rm EN}(X)$ be a functor defined by $(F, U)\mapsto F\otimes U$ for $F\in \mathrm{Ob}(\mathcal{C}^{\rm N}(X))$ and $U\in \mathrm{Ob}(\mathcal{C}^{\rm uni}(X))$. Clearly, it is a $k$-bilinear and exact-in-each-variable functor. Hence, there exists a $k$-linear right exact functor $T': \mathcal{C}^{\rm N}(X) \otimes_{D} \mathcal{C}^{\rm uni}(X) \rightarrow \mathcal{C}^{\rm EN}(X)$ which makes the following diagram commutative.
\begin{equation}\label{eq:DT}
\xymatrix{
\mathcal{C}^{\rm N}(X) \times \mathcal{C}^{\rm uni}(X) \ar[d]_{T}  \ar[r]^{\quad \quad \quad \otimes_D}  &  **[r] \mathcal{C}^{\rm N}(X) \otimes_{D} \mathcal{C}^{\rm uni}(X) \ar[dl]^{T'} \\
\mathcal{C}^{\rm EN}(X)  &   \\
}
\end{equation}
Let $E$ be a semi--finite bundle on $X$. Then, by Lemma \ref{sf=f+u}, we have 
$$
E\simeq \bigoplus_{i=1}^n L_i \otimes U_i \;,
$$ 
where $L_i \in \mathrm{Ob}(\mathcal{C}^{\rm N}(X))$ and $U_i \in \mathrm{Ob}(\mathcal{C}^{\rm uni}(X))$. 
Since $T'$ is right exact, using the commutativity of the diagram \eqref{eq:DT}, we have
$$
T' \Big(\bigoplus_{i} \otimes_D(L_i, U_i)\Big) = 
\bigoplus_{i} \Big(T' \big( \otimes_D(L_i, U_i)\big)\Big) \simeq E\;.
$$ 
Hence, the functor $T'$ is essentially surjective. To see that the functor $T'$ is fully faithful, let $L_1,L_2 \in \mathrm{Ob}(\mathcal{C}^{\rm N}(X))$ and $U_1, U_2 \in \mathrm{Ob}(\mathcal{C}^{\rm uni}(X))$. Since the functor $\otimes_D$ is universal and $k$-bilinear, we have $\mathrm{Hom}(L_1,L_2) \otimes_k \mathrm{Hom}(U_1,U_2)\simeq \mathrm{Hom}(\otimes_D(L_1,U_1), \otimes_D(L_2,U_2)) $. Note that 
\begin{align*}
\mathrm{Hom}\left(T'\left(\otimes_D(L_1,U_1)\right), T'\left(\otimes_D(L_2,U_2)\right)\right) & = \mathrm{Hom}(L_1 \otimes U_1,L_2 \otimes U_2) \\
	& \simeq \mathrm{Hom}(L_1,L_2) \otimes_k \mathrm{Hom}(U_1,U_2).
	\end{align*}
Hence, the functor $T'$ is fully faithful. The assertion follows by Remark \ref{Rep(G_1*G_2)= Rep(G_1)*Rep(G_2)}.
\end{proof}

\begin{remark}
\rm Let $A$ be an abelian variety defined over an algebraically closed field $k$ of characteristic zero. According to \cite[Corollary 3.5]{AA}, we have 
$$\pi^{\rm EN}(A,0)\simeq \pi^{\rm \acute{e}t}(A,0)\times_k \pi^{\rm uni}(A,0).$$
This result provides alternative proof of Theorem \ref{Prod EN=N*uni on torus} when $X$ is an algebraic torus. It is important to note that the proof of Theorem~\ref{Prod EN=N*uni on torus} does not require the affine group scheme $\pi^{\rm EN}(X,0)$ to be abelian. In contrast, the proof of \cite[Corollary 3.5]{AA} requires that the affine group scheme $\pi^{\rm EN}(X,0)$ be abelian.
\end{remark}

\begin{remark}\label{R:ProdF U}
	\rm
	In \cite[Chapter~IV, Lemma~8]{No}, Nori established the product formula for the unipotent fundamental group scheme in the algebraic context. The same argument can also be applied in the complex analytic setting, since the K\"unneth formula holds for compact complex manifolds as well. 
Consequently, if $X$ is a complex torus, then the unipotent fundamental group scheme $\pi^{\rm uni}(X,0)$ is an abelian group scheme.
\end{remark}

\begin{cor}
	Let $X$ be a complex torus of complex dimension $g$. Then, we have 
	$$\pi^{\rm EN}(X,0) \simeq \widehat{\mathbb{Z}}^{2g} \times (\mathbb{G}_{a, \mathbb{C}})^{g},$$
	where $\mathbb{G}_{a, \mathbb{C}}$ denote the additive group scheme over the field of complex numbers.
\end{cor}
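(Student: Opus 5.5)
By Theorem \ref{Prod EN=N*uni on torus}, it suffices to identify the two factors separately: we will show that $\pi^{\mathrm{\acute{e}t}}(X,0)\simeq \widehat{\mathbb{Z}}^{2g}$ and $\pi^{\rm uni}(X,0)\simeq (\mathbb{G}_{a,\mathbb{C}})^{g}$.

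\emph{The \'etale factor.} As already used in the proof of Lemma \ref{finite is sum of torsion on torus}, the topological fundamental group of $X=V/\Lambda$ is $\Lambda\simeq\mathbb{Z}^{2g}$, and finite \'etale covers of $X$ correspond to finite quotients of $\Lambda$. Hence $\pi^{\mathrm{\acute{e}t}}(X,0)$ is the profinite completion $\widehat{\Lambda}\simeq\widehat{\mathbb{Z}}^{2g}$, regarded as a pro-finite constant group scheme over $\mathbb{C}$.

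\emph{The unipotent factor.} By Remark \ref{R:ProdF U}, $\pi^{\rm uni}(X,0)$ is an abelian unipotent affine group scheme over $\mathbb{C}$. In characteristic zero such a group scheme is a vector group: its Lie algebra is an abelian pro-nilpotent Lie algebra, and the exponential map identifies the group with it. So $\pi^{\rm uni}(X,0)\simeq \mathrm{Spec}\,\mathrm{Sym}(W)$ for some vector space $W$, and we only need $\dim W=g$. To compute $W$, use the standard Tannakian identification
\[
\mathrm{Hom}(\pi^{\rm uni}(X,0),\mathbb{G}_a)\simeq \mathrm{Ext}^1_{\mathcal{C}^{\rm uni}(X)}(\mathcal{O}_X,\mathcal{O}_X).
\]
This holds because homomorphisms to $\mathbb{G}_a$ are exactly the rank-two representations that are extensions of the trivial representation by itself. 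Since $\mathcal{C}^{\rm uni}(X)$ is closed under extensions inside the category of holomorphic vector bundles, the right-hand side equals $\mathrm{H}^1(X,\mathcal{O}_X)$, which has dimension $g$. For an abelian unipotent group $\mathrm{Spec}\,\mathrm{Sym}(W)$ we have $\mathrm{Hom}(G,\mathbb{G}_a)=W$, the primitive elements. Hence $\dim W=g$ and $\pi^{\rm uni}(X,0)\simeq\mathbb{G}_{a,\mathbb{C}}^{g}$.

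Combining the two identifications with Theorem \ref{Prod EN=N*uni on torus} gives the corollary. The main obstacle is the structural step for the unipotent factor: passing from "abelian unipotent" to "vector group". This needs care because $\pi^{\rm uni}$ is a priori only pro-unipotent, so the argument must go through the Lie algebra or Hopf algebra and show that, when the group is commutative, it is determined by its primitives. Cohomologically this comes down to the fact that $\mathrm{H}^\bullet(X,\mathcal{O}_X)$ is the exterior algebra on $\mathrm{H}^1(X,\mathcal{O}_X)$, which matches the cohomology of $\mathbb{G}_a^g$. If the direct Hopf-algebra argument becomes delicate, the fallback is Nori's product formula as in Remark \ref{R:ProdF U}: write $X$ as a quotient of $V\simeq\mathbb{C}^g$ and compare with $(\mathbb{C}/\mathbb{Z}^2)^g$ through the comparison of $\mathrm{Ext}^1$ and $\mathrm{Ext}^2$ on $\mathcal{O}_X$, noting that the cup product $\mathrm{H}^1\otimes\mathrm{H}^1\to\mathrm{H}^2$ is antisymmetric with no extra relations, which forces the group to be free abelian unipotent of rank $g$.
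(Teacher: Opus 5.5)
Your proposal is correct and follows essentially the paper's route: Theorem~\ref{Prod EN=N*uni on torus} together with $\pi^{\mathrm{\acute{e}t}}(X,0)\simeq\widehat{\Lambda}\simeq\widehat{\mathbb{Z}}^{2g}$, commutativity of $\pi^{\rm uni}(X,0)$ from Remark~\ref{R:ProdF U}, and the count $\mathrm{Ext}^1(\mathcal{O}_X,\mathcal{O}_X)\simeq\mathrm{H}^1(X,\mathcal{O}_X)\simeq\mathbb{C}^g$ for the abelian unipotent factor. The only difference is cosmetic: the paper says the Lie algebra is dual to $\mathrm{Ext}^1$, while you say $\mathrm{Hom}(\pi^{\rm uni}(X,0),\mathbb{G}_a)$ is the space $W$ of primitives of $\mathrm{Sym}(W)$, which is the same computation dualized, so the cohomological fallback in your last paragraph is not needed.
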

\begin{proof}
By Remark \ref{R:ProdF U}, the unipotent fundamental group scheme $\pi^{\rm uni}(X,0)$ is an abelian group scheme. Hence, the tangent space of $\pi^{\rm uni}(X,0)$ at the identity is dual to $\mathrm{Ext}^1(\mathcal{O}_X, \mathcal{O}_X)$, and
$$\mathrm{Lie}(\pi^{\rm uni}(X,0))\simeq \mathrm{Ext}^1(\mathcal{O}_X, \mathcal{O}_X)^{\vee}\simeq \mathrm{H}^1(X,\mathcal{O}_X)^{\vee}\simeq \mathbb{C}^{g}$$
as $\mathrm{dim}(\mathrm{H}^1(X,\mathcal{O}_X))=g$. In characteristic zero, a unipotent group scheme is completely determined by its Lie algebra. Thus, an affine group scheme $\pi^{\rm uni}(X,0)$ is given by the functor
$$\pi^{\rm uni}(X,0): \mathbf{Alg}_{\mathbb{C}} \longrightarrow \mathbf{Grp}$$
which maps $R$ to $\mathrm{Hom}_{\mathbb{C}\mathrm{-lin}}(\mathbb{C}^g, R)$, where $\mathbf{Alg}_{\mathbb{C}}$ denotes the category of $\mathbb{C}$-algebra and $\mathbf{Grp}$ denotes the category of groups. Note that $\mathrm{Hom}_{\mathbb{C}\mathrm{-lin}}(\mathbb{C}^g, R) \simeq \mathrm{Hom}_{\mathbb{C}\mathrm{-alg}}(\mathbb{C}[x_1,\ldots, x_g], R)$ for any $R \in \mathbf{Alg}_{\mathbb{C}}$. Hence, $\pi^{\rm uni}(X,0)$ is isomorphic to $(\mathbb{G}_{a, \mathbb{C}})^{g}$.
\end{proof}

\section*{Acknowledgements}
The authors would like to thank the anonymous referee for constructive comments and valuable suggestions that improved this manuscript. The research work of Pavan Adroja and Sanjay Amrutiya is financially supported by the Science and Engineering Research Board-Department of Science and Technology (SERB-DST), Government of India, under Project No. CRG/2023/000477.



\begin{thebibliography}{012345}
\bibitem{AA} P.~Adroja and S.~Amrutiya, \emph{On an extension of Nori and local fundamental group schemes}, Comm. Algebra \textbf{53}(10) (2025), 4241-4255.

\bibitem{BV} N. Borne and A. Vistoli, \emph{Fundamental gerbes}, Algebra Number Theory {\bf 13} (2019), no.~3, 531--576.

\bibitem{PD} P.~Deligne, \emph{Catégories tannakiennes}. in The Grothendieck Festschrift, Vol. II, Progr. Math., vol. \textbf{87}, Birkhäuser, Boston, MA, 1990, p. 111-195.

\bibitem{Mat59} Y.~Matsushima, \emph{Fibr\'es holomorphes sur un tore complexe}, Nagoya Math. J. {\bf 14} (1959), 1--24.

\bibitem{Mor59} A. Morimoto, \emph{Sur la classification des espaces fibr\'es vectoriels holomorphes sur un tore complexe admettant des connexions holomorphes}, Nagoya Math. J. {\bf 15} (1959), 83--154.

\bibitem{No} M.~V.~Nori, \emph{The fundamental group-scheme}, Proc. Indian Acad. Sci. Math. Sci. \textbf{91} (1982), no. 2, p. 73-122. 
		
\bibitem{Ot} S.~Otabe, \emph{An extension of Nori fundamental group}, Comm. Algebra \textbf{45} (2017), 3422-3448.



\end{thebibliography}
\end{document}